\documentclass{article}
\usepackage[margin=1.5in]{geometry}
\usepackage{amsmath}
\usepackage{amssymb}
\usepackage{amsthm}
\usepackage[utf8]{inputenc}

\usepackage[pdftex]{graphicx}
\usepackage{times}
\usepackage{enumerate}

\usepackage{xspace}
\usepackage{comment}
\usepackage{algorithm}
\usepackage{algorithmic}
\usepackage{eufrak}
\usepackage{color}
\definecolor{red}{rgb}{1.0,0.0,0.0}
\definecolor{blue}{rgb}{0.0,0.0,1.0}

\newcommand{\cJ}{{\cal J}}

\newcommand{\bx}{{\bf x}}

\newcommand{\bz}{{\bf z}}
\newcommand{\by}{{\bf y}}

\newcommand{\reals}{\mathbb{R}}
\newcommand{\comps}{\mathbb{C}}

\newcommand{\sfT}{\textsf{T}}


\newtheorem{thm}{Theorem}
\newtheorem{assumption}{Assumption}
\newtheorem{cor}{Corollary}

\theoremstyle{definition}
\newtheorem{rem}{Remark}
\newtheorem{definition}{Definition}

\newcommand{\diam}{\mathsf{diam}}

\newcommand{\conv}{\mathsf{ch}}

\newcommand{\proj}[2][]{\mathrm{\mathcal{P}}_{#1}(#2)}

\renewcommand{\S}{\ensuremath{\mathcal{S}}\xspace}

\newcommand{\A}{\ensuremath{\mathcal{A}}\xspace}
\newcommand{\U}{\ensuremath{\mathcal{U}}\xspace}
\newcommand{\X}{\ensuremath{\mathcal{X}}\xspace}

\renewcommand{\SS}{\ensuremath{\mathbb{S}}\xspace}

\DeclareMathOperator*{\argmin}{arg\,min}

\title{Bi-Level Online Control without Regret}
\author{Andrey Bernstein \thanks{This work was supported by the U.S. Department of Energy under Contract No. DE-AC36-08GO28308 with the National Renewable Energy Laboratory. The U.S. Government retains and the publisher, by accepting the article for publication, acknowledges that the U.S. Government retains a nonexclusive, paid-up, irrevocable, worldwide license to publish or reproduce the published form of this work, or allow others to do so, for U.S. Government purposes.}
}

\begin{document}

\maketitle

\begin{abstract}
This paper considers a bi-level discrete-time control framework with real-time constraints, consisting of several local controllers and a central controller. The objective is to bridge the gap between the online convex optimization and real-time control literature by proposing an online control algorithm with small dynamic regret, which is a natural performance criterion in nonstationary environments related to real-time control problems. We illustrate how the proposed algorithm can be applied to real-time control of power setpoints in an electrical grid.
\end{abstract}

\section{Introduction}
We consider a bi-level discrete-time control framework with real-time constraints, consisting of several \emph{local controllers} and a \emph{central controller}. The tasks of a local controller are (i) to implement setpoints issued by the central controller and (ii) to advertise a prediction of the objective function and the constraints on the feasible setpoints to the central controller. In turn, the central controller uses these advertisements and its system-wide measurements and modeling to compute the next feasible setpoints for the local controllers. This framework is illustrated in Figure \ref{fig:frame}.

This framework is appropriate in the modern real-time control of cyber-physical systems, such as electrical-grid control (e.g., \cite{Jahangiri,Paudyal11, OID, commelec1,opfPursuit}) or the control of autonomous vehicles (e.g., \cite{auton1,auton2,auton3}). In particular, in the context of electrical-grid control, a similar approach was used in \cite{commelec1, commelec2} to control the power setpoints of devices in real time. Moreover, \cite{opfPursuit} considered a distributed control framework in this spirit. However, in \cite{commelec1,commelec2}, a heurisitic was used, and no theoretical guarantees were provided, whereas  \cite{opfPursuit} assumed perfect knowledge of the objective function and feasible sets at the time of the decision making at the central controller.  In \cite{errDiff}, an approach for error correction in local controllers was proposed; however, the performance of the closed-loop system was not analyzed theoretically.

\begin{figure}
  \centering
  \includegraphics[width=0.6\columnwidth]{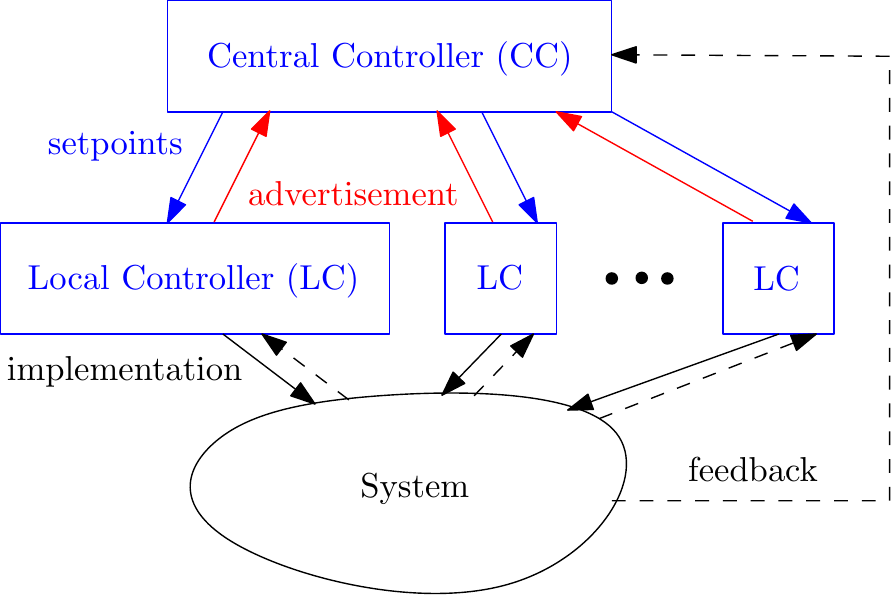}
\caption{Illustration of the bi-level control framework.} \label{fig:frame}
\end{figure}

Observe that this framework is reminiscent of online learning and, in particular, of online convex optimization (OCO) algorithms. Indeed, at each time step, the central controller is faced with a nonstationary (time-varying) optimization problem, and it is required to track its optimal solution. The literature on OCO is focused on devising online algorithms that have provably bounded (or vanishing) \emph{average regret} \cite{black54,zinkevich,CesaBianchi,Hazan2007}; see also \cite{hazanBook} for a recent overview of the subject. However, these  algorithms typically lack the control perspective because they naturally operate in an \emph{open-loop} fashion. In particular, an online optimization algorithm issues setpoints that are assumed to be ``implemented'' perfectly by the system. Hence, contrary to typical control settings, there is no explicit feedback from the system on the actual implementation. 

This paper is the first attempt to bridge the gap between online convex optimization and real-time control. We propose a first-order online control algorithm in the spirit of \cite{zinkevich}, which we call the Online Gradient Control (OGC), in the above-mentioned bi-level framework. We show theoretical guarantees on the algorithm's \emph{dynamic regret} (or \emph{tracking regret}). 
The latter is an extension of the standard regret notion, which compares the performance of an online algorithm to an arbitrary sequences of setpoints (rather than a single fixed setpoint) in hindsight (see, e.g., \cite{Herbster,Adamskiy,hazan2009,dynRegHall}). This regret notion is natural in nonstationary environments associated with real-time control problems. We also discuss the application of the framework and algorithm to the real-time control of heterogeneous devices and, in particular, to the real-time control of power setpoints in an electrical grid.

An additional contribution of this paper is in the context of online convex optimization with \emph{time-varying feasible sets}. Indeed, as the online control problem involves sets of feasible setpoints that change with time, the no-regret result established in this paper is also applicable in the ``open-loop'' optimization setting. To the best of our knowledge,  the only work that considers time-varying feasible sets explicitly is that of  \cite{neu2014}; however, it is assumed there that these sets are drawn from a \emph{fixed} unknown distribution, whereas in the present paper we assume an \emph{arbitrary} sequence of feasible sets.

The paper is organized as follows. Section \ref{sec:frame} presents the bi-level control framework and relevant notation. Section \ref{sec:ogc} introduces our OGC algorithm and analyzes its dynamic regret. Section \ref{sec:app} shows how to apply the algorithm to control a mix of heterogeneous resources, and, in particular, to control an electrical grid. Finally, Section \ref{sec:conc} concludes the paper and outlines some future research directions.

\section{Bi-Level Control Framework} \label{sec:frame}
Assume that there are $J$ local controllers (LCs) indexed by $j = 1, \ldots, J$. The discrete time step index is denoted by $n =1, 2, \ldots$. Let $\S_n(j)$ denote the convex compact set of feasible setpoints of LC $j$ at time step $n$.  Also, let $C_n^{(j)}: \S_n(j) \rightarrow \reals$ denote a convex cost function that represents the objective function of LC $j$.  At each time step $n$, controller $j$ sends to the central controller (CC) an \emph{advertisement} $(\A_{n+1}(j), \hat{C}_{n+1}^{(j)})$ of its feasible set and cost function valid for time step $n+1$ by using \emph{a persistent predictor}, namely $\A_{n+1}(j) = \S_{n}(j)$ and $\hat{C}_{n+1}^{(j)} = C_{n}^{(j)}$. 

Upon receiving the advertisements from all the LCs, the CC computes the feasibility constraints on the overall system based on its system view and the advertisements. Let $\U_{n} \subseteq \A_{n+1} := \S_n(1) \times ... \times \S_n(J)$ denote the compact convex set representing the system feasibility constraints. The CC also computes the estimation of the overall objective function:
\begin{align} \label{eqn:objF}
    F_{n}(x) &:= \sum_{j = 1}^J w_j \hat{C}_{n+1}(x(j)) + G_{n}(x) \nonumber \\
    & = \sum_{j = 1}^J w_j C_{n}(x(j)) + G_{n}(x)
\end{align}
for any $x \in \U_{n}$, where $w_j$ are some weighting (normalization) factors, and the convex function $G_{n}(x)$ represents a system-wide objective. Finally, the CC computes a vector of setpoints $x_{n+1}$ based on $F_{n}(x)$ and $\U_{n}$, and sends the individual setpoints $x_{n+1}(j)$ to the LCs. Upon receiving $x_{n+1}(j)$, LC $j$ implements a feasible approximation $y_{n+1}(j) \in \S_{n+1}(j)$, and the process repeats.
  The interaction between the LCs and the CC is summarized in Algorithm \ref{alg:inter}.

\begin{algorithm}[h!]
\caption{Interaction between local controllers and central controller} \label{alg:inter}
\begin{algorithmic}[1]
\STATE{Set $n = 0$.}
\LOOP
    \STATE{At time step $n$, every LC $j = 1, ..., J$:} \label{state:n}
    \begin{enumerate}[(a)]
        \item Receives a setpoint request $x_n(j)$ sent by the CC.
        \item Implements an approximation $y_n(j)$ of $x_n(j)$. The implemented setpoint $y_n(j)$ is constrained to lie in the set $\S_n(j)$ representing the local feasibility constraints.
        \item Predicts its  feasible set $\S_{n+1}(j)$  by $\A_{n+1}(j) := \S_n(j)$.
        \item Predicts its local objective function $C_{n+1}^{(j)}$ by $\hat{C}_{n+1}^{(j)} := C_{n}^{(j)}$.
        \item Sends to the CC $\A_{n+1}(j)$ and  $\hat{C}_{n+1}^{(j)}$ over a communication network.
    \end{enumerate}
    \STATE{Upon receiving the advertisements from all the LCs, the CC:}
    \begin{enumerate}[(a)]
        \item Computes the feasibility constraints on the overall system $\U_{n} \subseteq \A_{n+1} := \S_{n}(1) \times ... \times \S_{n}(J)$.
        \item Computes the setpoints' vector $x_{n+1} \in \U_{n}$ based on its current objective function \eqref{eqn:objF}.
        \item Sends $x_{n+1}(j)$ to every LC $j$ over a communication network.
    \end{enumerate}
    \STATE{$n := n + 1$.}
\ENDLOOP
\end{algorithmic}
\end{algorithm}

  \section{Online Gradient Control} \label{sec:ogc}
  In this section, we present our Online Gradient Control (OGC) algorithm. The algorithm is based on the following two steps: 
  \begin{enumerate}[(i)]
      \item The central controller chooses setpoints according to the online gradient descent algorithm:
  \begin{equation} \label{eqn:alg_CC}
      x_{n+1} = \proj[\U_n]{\hat{y}_n - \alpha  \nabla F_n (\hat{y}_n)},
  \end{equation}
  where $\proj[\U]{\cdot}$ is the projection operator, $\alpha$ is a step-size parameter, and $\hat{y}_n$ is the estimation of the setpoint $y_n$ implemented by the LCs at time step $n$. The latter is  obtained either from the LCs or by system-wide measurements.
  \item Upon receiving a setpoint $x_n(j)$, LC $j$ implements a projected version thereof, namely:
  \begin{equation} \label{eqn:alg_LC}
      y_{n}(j) = \proj[\S_n(j)]{x_n(j)}.
  \end{equation}  
  \end{enumerate}

We next analyze the performance of the OGC in terms of optimality and stability.  To this end, we first introduce the following assumptions and definitions.
\begin{assumption} \label{asm:meas}
The measurement of the implemented setpoint $\hat{y}_n$ is $\varepsilon$-accurate. That is, for all $n$,
\[
\|y_n - \hat{y}_n\| \leq \varepsilon.
\]
\end{assumption}

\begin{assumption} \label{asm:lip}
The gradients $\{\nabla F_n\}$ are uniformly bounded and Lipschitz continuous with a common parameter $\lambda < \infty$. Namely, for all $n$, all $x, x' \in \U_n$, $\| \nabla F_n(x) - \nabla F_n(x')\| \leq \lambda \|x - x'\|$. Let $F$ denote a finite constant that is a uniform upper bound on $\|\nabla F_n(x)\|$. 
\end{assumption}

\begin{assumption} \label{asm:bounded_sets}
The sequence of feasible sets $\{\S_n\}$ is uniformly bounded. That is, there exists a finite constant $B$ such that $|\S_n| \leq B$ for all $n$. Let $D$ denote the upper bound on the diameters of $\{\S_n\}$, so that $\diam (\S_n) \leq D$ for all $n$.
\end{assumption}

  \begin{definition}[Admissible sequence of setpoints]
  A sequence $\bx_{1:n} := \{x_k\}_{k=1}^n$ is said to be \emph{admissible} if $x_k \in \U_k$ for every $k = 1, ..., n$.
  Let $\X_n$ denote the set of all the admissible sequences of length $n$, and let $\X_{\infty}$ denote the set of  all the infinite admissible sequences $\bx_{1:\infty}$.
  \end{definition}
  
  \begin{definition}[Dynamic regret] \label{def:regret}
  Consider the sequence of implemented setpoint $\by_{1:n} := \{y_k\}_{k=1}^n$ by the control algorithm up to time step $n$. The \emph{total dynamic regret} of the  algorithm with respect to a given sequence $\bz_{1:n} \in \X_n$ is defined as
  \begin{equation}
  r_n(\bz_{1:n}) := \sum_{k=1}^n (F_k(y_k) - F_k(z_k)).
  \end{equation}
  Similarly, $r_n(\bz_{1:n})/n$ is the \emph{average dynamic regret}.
  \end{definition}
    
  \begin{definition}[Temporal variability]
  For any sequence $\bx_{1:n} = \{x_k\}_{k = 1}^n$, let
  \begin{equation}
      V(\bx_{1:n}) := \sum_{k = 1}^n \|x_k - x_{k+1}\|
  \end{equation}
  denote its temporal variability.
  \end{definition}

  \begin{thm} \label{thm:ogc_conv}
  Under Assumptions \ref{asm:meas}, \ref{asm:lip}, and \ref{asm:bounded_sets}, for any $\bz_{1:\infty} \in \X_{\infty}$ and any $\alpha > 0$, we have that
   \begin{equation} \label{eqn:reg_bound}
   \limsup_{n \rightarrow \infty} \frac{r_n(\bz_{1:n})}{n} 
   \leq K_1 \alpha + \frac{K_2(1 + \alpha \lambda)}{\alpha}\varepsilon + \frac{K_3}{\alpha} \limsup_{n \rightarrow \infty} \frac{V(\bz_{1:n})}{n} 
   \end{equation}
   where
   \[
   K_1 := \frac{F^2}{2}, \, K_2 := \frac{[2(D +  \alpha F) + (1 + \alpha \lambda)\varepsilon]}{2}, \, \text{and } \, K_3 := D + B.
   \] 
   In particular, \eqref{eqn:reg_bound} is valid for the optimal sequence 
   \[
   \bz^*_{1:n} \in \argmin_{\bz_{1:n} \in \X_n} \sum_{k=1}^n F_k(z_k).
   \]
  \end{thm}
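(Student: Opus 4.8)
The plan is to run the classical online-gradient-descent potential argument on $\|x_{n+1}-z_n\|^2$, adapted in two ways. First, the measurement error of Assumption~\ref{asm:meas} must be carried through the gradient step — this is where the factor $1+\alpha\lambda$ in \eqref{eqn:reg_bound} comes from. Second, the telescoping has to reconcile the two families of sets at play: the iterates $x_n$ and the comparator points $z_{n-1}$ all lie in $\U_{n-1}$, while the implemented points $y_n$ and the comparator points $z_n$ all lie in $\S_n := \S_n(1)\times\cdots\times\S_n(J)$ (indeed $z_n\in\U_n\subseteq\S_n$, and \eqref{eqn:alg_LC} reads $y_n=\proj[\S_n]{x_n}$ coordinatewise). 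Throughout I take the regularity in Assumption~\ref{asm:lip} to hold on a neighborhood of $\S_n$, which contains all the points appearing below (this is implicit already in \eqref{eqn:alg_CC}, where $\nabla F_n$ is evaluated at $\hat y_n$).

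\emph{Step 1 (one-step inequality).} Fix $n$. Since $z_n\in\U_n$ and $\proj[\U_n]{\cdot}$ is non-expansive, \eqref{eqn:alg_CC} gives $\|x_{n+1}-z_n\|^2\le\|\hat y_n-\alpha\nabla F_n(\hat y_n)-z_n\|^2$. I split $\hat y_n-\alpha\nabla F_n(\hat y_n)=\big(y_n-\alpha\nabla F_n(y_n)\big)+e_n$, where $e_n:=(\hat y_n-y_n)-\alpha\big(\nabla F_n(\hat y_n)-\nabla F_n(y_n)\big)$ satisfies $\|e_n\|\le(1+\alpha\lambda)\varepsilon$ by Assumptions~\ref{asm:meas} and~\ref{asm:lip}. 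Expanding the square, bounding the cross term by $2\|y_n-\alpha\nabla F_n(y_n)-z_n\|\,\|e_n\|\le 2(D+\alpha F)(1+\alpha\lambda)\varepsilon$ (using $\|y_n-z_n\|\le D$ and $\|\nabla F_n(y_n)\|\le F$, Assumptions~\ref{asm:bounded_sets} and~\ref{asm:lip}), then expanding $\|y_n-\alpha\nabla F_n(y_n)-z_n\|^2$ and applying convexity of $F_n$ together with $\|\nabla F_n(y_n)\|\le F$, I obtain
\[
2\alpha\big(F_n(y_n)-F_n(z_n)\big)\;\le\;\|y_n-z_n\|^2-\|x_{n+1}-z_n\|^2+\alpha^2F^2+2K_2(1+\alpha\lambda)\varepsilon,
\]
since $2(D+\alpha F)(1+\alpha\lambda)\varepsilon+(1+\alpha\lambda)^2\varepsilon^2=\big[2(D+\alpha F)+(1+\alpha\lambda)\varepsilon\big](1+\alpha\lambda)\varepsilon=2K_2(1+\alpha\lambda)\varepsilon$.

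\emph{Step 2 (telescoping across the two set families).} Summing over $n=1,\dots,N$ and reindexing the $\|x_{n+1}-z_n\|^2$ terms, the sum collapses up to the residuals $\|y_n-z_n\|^2-\|x_n-z_{n-1}\|^2$ and the positive boundary term $\|y_1-z_1\|^2\le D^2$. To control each residual I use $y_n=\proj[\S_n]{x_n}$ and $z_n\in\S_n$, so $\|y_n-z_n\|\le\|x_n-z_n\|\le\|x_n-z_{n-1}\|+\|z_{n-1}-z_n\|$; squaring and invoking $\|x_n-z_{n-1}\|\le\diam(\U_{n-1})\le D$ (since $x_n,z_{n-1}\in\U_{n-1}\subseteq\S_{n-1}$) and $\|z_{n-1}-z_n\|\le 2B$ (each point has norm at most $B$ by Assumption~\ref{asm:bounded_sets}), I get $\|y_n-z_n\|^2-\|x_n-z_{n-1}\|^2\le 2(D+B)\|z_{n-1}-z_n\|=2K_3\|z_{n-1}-z_n\|$. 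Hence
\[
2\alpha\,r_N(\bz_{1:N})\;\le\;D^2+2K_3\,V(\bz_{1:N})+N\big(\alpha^2F^2+2K_2(1+\alpha\lambda)\varepsilon\big).
\]

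\emph{Step 3 (conclusion).} Dividing by $2\alpha N$ and letting $N\to\infty$, the term $D^2/(2\alpha N)$ vanishes and \eqref{eqn:reg_bound} follows with $K_1=F^2/2$. Because Steps~1--2 only ever use finite admissible sequences, the last display holds verbatim with $\bz_{1:N}$ replaced by the horizon-$N$ minimizer $\bz^*_{1:N}$; taking $\limsup_N$ then yields the statement for the optimal sequence. I expect Step~1 to be the main obstacle: obtaining the error term with the precise multiplier $(1+\alpha\lambda)$ requires comparing the realized descent direction $\hat y_n-\alpha\nabla F_n(\hat y_n)$ with the idealized one $y_n-\alpha\nabla F_n(y_n)$ \emph{before} projecting — perturbing the function values directly would not reproduce the right dependence, and one must still keep the various bounded quantities from blowing up when crossing between $\U_{n-1}$ and $\S_n$. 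Step~2 is the other delicate point, and is precisely what makes the argument tolerate an \emph{arbitrary} sequence of feasible sets rather than a stochastic or fixed one.
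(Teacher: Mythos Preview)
Your proposal is correct and follows essentially the same argument as the paper: non-expansiveness of the projections onto $\U_n$ and $\S_n$, the split $\hat y_n-\alpha\nabla F_n(\hat y_n)=(y_n-\alpha\nabla F_n(y_n))+e_n$ with $\|e_n\|\le(1+\alpha\lambda)\varepsilon$, convexity of $F_n$, and a telescoping sum controlled by $2(D+B)\|z_n-z_{n+1}\|$. The only organizational difference is where the comparator shift $z_n\to z_{n+1}$ is inserted: the paper applies it immediately when expanding $\|x_{n+1}-z_{n+1}\|^2$ so that $(y_{n+1}-z_{n+1})^2\le(y_n-z_n)^2-\cdots$ telescopes directly, whereas you derive a clean one-step bound on $\|y_n-z_n\|^2-\|x_{n+1}-z_n\|^2$ first and handle the shift afterwards via $\|y_n-z_n\|\le\|x_n-z_n\|\le\|x_n-z_{n-1}\|+\|z_{n-1}-z_n\|$; the resulting constants are identical.
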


  \begin{proof}
  The proof follows that of \cite[Theorem 1]{zinkevich}. For simplicity of exposition, we use below a scalar-style notation; however, the proof works for vectors by interpreting the regular multiplication as the inner product. 
  
  Let $\bz_{1:n+1} \in \X_{n+1}$. We have that
  \begin{align}
    &(y_{n+1} - z_{n+1})^2 \leq (x_{n+1} -  z_{n+1})^2 \nonumber \\
    &= (x_{n+1} - z_n +  z_n -  z_{n+1})^2 \nonumber \\
    &= (x_{n+1} - z_n)^2 + 2 (x_{n+1} - z_n) (z_n -  z_{n+1}) + (z_n -  z_{n+1})^2 \nonumber \\
    &\leq (\hat{y}_n - z_n - \alpha \nabla F_n (\hat{y}_n))^2 \nonumber \\
    &\quad + [2 (x_{n+1} - z_n) + (z_n -  z_{n+1})](z_n -  z_{n+1}) \nonumber\\
 &\leq (\hat{y}_n - z_n - \alpha \nabla F_n (\hat{y}_n))^2 + 2(D + B) \|z_n -  z_{n+1}\| \label{eqn:proof_step1}, 
  \end{align}
  where the first inequality follows by using \eqref{eqn:alg_LC}, the fact that $z_{n+1} \in \U_{n+1} \subseteq \S_{n+1} $, and the non-expansive property of the projection operator; the second inequality holds by \eqref{eqn:alg_CC}, the fact that $z_n \in \U_n$,   and the non-expansive property of the projection operator; and in the last inequality, we used the Cauchy-Schwarz inequality and the fact that under Assumption \ref{asm:bounded_sets}
  \begin{align*}
  \|2 (x_{n+1} - z_n) + (z_n -  z_{n+1})\| &
  \leq 2 \|x_{n+1} - z_n\| + \|z_n\| + \|z_{n+1}\| \\ 
  &\leq 2 \diam (\S_n) + |\S_n| + |\S_{n+1}|\\
  &\leq 2 (D + B).
  \end{align*}
We now expand the first term in \eqref{eqn:proof_step1}. It holds that
\begin{align}
& (\hat{y}_n - z_n - \alpha \nabla F_n (\hat{y}_n))^2 \nonumber \\
& = (y_n - z_n - \alpha \nabla F_n (y_n) + (\hat{y}_n - y_n) + \alpha (\nabla F_n (\hat{y}_n) - \nabla F_n (y_n)))^2. \label{eqn:proof_step2}
\end{align}
Let
\[
\gamma_n := (\hat{y}_n - y_n) + \alpha (\nabla F_n (\hat{y}_n) - \nabla F_n (y_n))
\]
and note that under Assumptions \ref{asm:meas} and \ref{asm:lip}, we have
\[
\|\gamma_n\| \leq (1 + \alpha \lambda) \varepsilon.
\]
Continuing the derivation in \eqref{eqn:proof_step2}, we obtain 
\begin{align}
&(\hat{y}_n - z_n - \alpha \nabla F_n (\hat{y}_n))^2 \nonumber \\
    &\leq (y_n - z_n - \alpha \nabla F_n (y_n))^2\nonumber \\
    &\quad + \left[2(y_n - z_n - \alpha \nabla F_n (y_n)) + \gamma_n  \right] \gamma_n \nonumber\\
&\leq (y_n - z_n - \alpha \nabla F_n (y_n))^2 \nonumber\\
&\quad + [2(D +  \alpha F) + (1 + \alpha \lambda)\varepsilon] (1 + \alpha \lambda)\varepsilon \nonumber\\
    &= (y_n - z_n)^2 - 2 \alpha \nabla F_n (y_n)(y_n - z_n) + \alpha^2 (\nabla F_n (y_n))^2\nonumber \\
    &\quad + [2(D +  \alpha F) + (1 + \alpha \lambda)\varepsilon] (1 + \alpha \lambda)\varepsilon \nonumber\\
    &\leq (y_n - z_n)^2 - 2 \alpha (F_n (y_n) - F_n(z_n)) + F^2 \alpha^2 \nonumber\\
  &\quad + [2(D +  \alpha F) + (1 + \alpha \lambda)\varepsilon] (1 + \alpha \lambda)\varepsilon  \label{eqn:proof_step3}
  \end{align}
  where the second inequality holds by the Cauchy-Schwarz inequality and Assumptions \ref{asm:lip} and \ref{asm:bounded_sets}; and the last inequality holds by the standard argument for comparing the instantaneous regret of linear and strictly convex functions (see, e.g., \cite{zinkevich}). Combining \eqref{eqn:proof_step1} and \eqref{eqn:proof_step3} yields
   \begin{align}
    &(y_{n+1} - z_{n+1})^2  \nonumber \\
&\leq (y_n - z_n)^2 - 2 \alpha (F_n (y_n) - F_n(z_n)) + F^2 \alpha^2 \label{eqn:proof_step4}\\
  &\quad + [2(D +  \alpha F) + (1 + \alpha \lambda)\varepsilon] (1 + \alpha \lambda)\varepsilon \nonumber \\
&\quad + 2(D + B) \|z_n -  z_{n+1}\| \nonumber
  \end{align}
  
  By rearranging \eqref{eqn:proof_step4}, we obtain
  \begin{align*}
  &F_n (y_n) - F_n(z_n) \leq [ (y_n - z_n)^2 - (y_{n+1} - z_{n+1})^2 ]/(2 \alpha) + \alpha F^2/2 \\
  &\quad + [2(D +  \alpha F) + (1 + \alpha \lambda)\varepsilon] (1 + \alpha \lambda)\varepsilon/(2 \alpha) +  (D + B) \|z_n -  z_{n+1}\|/\alpha.
  \end{align*}
  Averaging the last inequality yields
  \begin{eqnarray*}
  \frac{1}{n} \sum_{k=1}^n (F_k(y_k) - F_k(z_k)) &\leq& [ (y_1 - z_1)^2 - (y_{n+1} - z_{n+1})^2 ]/(2 \alpha n)\\
  &&\quad+ K_1 \alpha + \frac{K_2(1 + \alpha \lambda)}{\alpha}\varepsilon + (K_3/\alpha) \frac{1}{n} V(\bz_{1:n}),
  \end{eqnarray*}
  which completes the proof.
  \end{proof}

  Observe that Theorem \ref{thm:ogc_conv} establishes that if the measurement error $\varepsilon$ is small, and if the optimal trajectory $\bz^*_{1:n}$ varies slowly (or infrequently), in the sense that 
  \[
  \limsup_{n \rightarrow \infty} \frac{V(\bz^*_{1:n})}{n} 
  \]
 is small, then the corresponding average dynamic regret will be small for the appropriately chosen step-size $\alpha$.

\begin{rem}
The discrete-time control problem considered here is an approximation to the corresponding continuous-time optimal control problem.  The result of Theorem \ref{thm:ogc_conv} thus implicitly states that if the optimal continuous-time trajectory $z^*(t)$ is continuous in $t$, then one can choose a fine enough discretization of the timescale  so that the average time variability  $V(\bz^*_{1:n})/n$ is asymptotically small, hence yielding small asymptotic regret.
\end{rem}

  \begin{rem}
Note that when $\hat{y}_n = x_n$ and the feasible set $\U_n$ does not depend on $n$, the algorithm \eqref{eqn:alg_CC} is the well-known online gradient descent algorithm first introduced in \cite{zinkevich}. The case when $\hat{y}_n = x_n$ but $\U_n$ \emph{depends} on  $n$ can be considered as an (open-loop) online optimization setting with time-varying feasible sets, and the result of Theorem \ref{thm:ogc_conv} applies to this case as well.  
  \end{rem}

  We conclude this section by noting  that the OGC algorithm is stable in the input-to-state stability sense by construction. To that end, consider the nonlinear dynamical system for the state variable $y_n$ given by
  \begin{equation}
  y_{n+1} = \proj[\S_{n+1}]{ \proj[\U_n]{y_n - \alpha  \nabla F_n (y_n) + \epsilon_n}},
  \end{equation}
  where $\epsilon_n$ is the measurement error associated with Assumptions \ref{asm:meas} and \ref{asm:lip}.
  Here, the sets $\{\S_n\}$ (and the sequence $\{\epsilon_n\}$) can be considered as \emph{exogenous inputs} to this dynamical system. 
  Under Assumption \ref{asm:bounded_sets}, it is clear that $\|y_n\| \leq B$ for all $n$,
  which establishes a bounded-input-bounded-state (BIBS) stability. Indeed, it states that if the sequence of ``inputs'' $\{\S_n\}$ is uniformly bounded, so is the sequence of ``states'' $\{y_n\}$.
  

\section{Application to Real-Time Control of Heterogeneous Devices} \label{sec:app}

In this section, we consider the setting where the LCs control heterogeneous devices of two general types: (i) \emph{convex devices} with convex feasible sets and (ii) \emph{discrete} devices with discrete  feasible sets with a \emph{finite} number of elements. Observe that for convex devices, the OGC can be directly applied. For discrete devices, we propose the following randomized scheme in the spirit of repeated games. In Section \ref{sec:rogc}, we give the general algorithm, whereas in Section \ref{sec:grids}, we outline the application in the context of electrical-grid control.

\subsection{Randomized Online Gradient Control (ROGC)} \label{sec:rogc}
Let $\cJ_{c} \bigcup \cJ_{d} = \{1, \ldots, J\}$ denote the partition of the devices into convex and discrete ones, respectively. Note that for $j \in \cJ_d$, the feasible set $\S_n(j)$ is discrete, hence non-convex. 

The ROGC algorithm is exactly the same as the OGC algorithm for the CC (cf.~\eqref{eqn:alg_CC}) and every LC $j \in \cJ_c$ (cf.~\eqref{eqn:alg_LC}). On the other hand, each LC $j \in \cJ_d$ performs the following:
\begin{enumerate}[(i)]
\item Advertise:
\begin{align} \label{eqn:advDisc1}
\A_{n+1}(j) &:= \Delta (\S_n(j)) \\
\hat{C}_{n+1}^{(j)}(y) &:= \sum_{s \in \S_n^{(j)}} C_n^{(j)}(s) y(s), \quad y \in \A_{n+1}(j),\label{eqn:advDisc2}
\end{align}
where $\Delta (\S)$ is the probability simplex imposed by a discrete set $\S$. Observe that $\A_{n+1}(j)$ is a convex set, and $\hat{C}_{n+1}^{(j)}(y)$ is the expected value of $C_n^{(j)}$ with respect to $y \in \Delta (\S_n^{(j)})$, thus a linear function of $y$. 
\item Compute:
  \begin{align} 
      y_{n}(j) = \proj[\Delta(\S_n(j))]{x_n(j)}.
  \end{align}  
\item Implement \emph{a random control} $S_n(j) \in \S_n(j)$ drawn from a probability distribution $y_{n}(j)$.
\end{enumerate}

Let 
\begin{align}
R_n(\bz_{1:n}) := \sum_{k = 1}^n \left( \left[ \sum_{j \in \cJ_c} w_j C_k^{(j)}(y_k(j)) \right ] + \left [\sum_{j \in \cJ_d} w_j C_k^{(j)}(S_k(j)) \right] + G_k(y_k) - F_k(z_k)  \right)
\end{align}
denote the dynamic regret of the ROGC algorithm at time step $n$ with respect to $\bz_{1:n}$ (cf.~Definition \ref{def:regret}). Note that $R_n(\bz_{1:n})$ is a random variable due to appearance of the random variables $\S_n(j)$. The following result is a direct application of Theorem \ref{thm:ogc_conv} to the ROGC algorithm.

\begin{cor}
Under the conditions of Theorem \ref{thm:ogc_conv}, the expected regret of the ROGC algorithm is bounded by
\begin{align*}
\limsup_{n \rightarrow \infty} \mathbb{E} \left( \frac{R_n(\bz_{1:n}) }{n} \right) \leq  K_1 \alpha + \frac{K_2(1 + \alpha \lambda)}{\alpha}\varepsilon + \frac{K_3}{\alpha} \limsup_{n \rightarrow \infty} \frac{V(\bz_{1:n})}{n} 
\end{align*}
for any $\bz_{1:\infty} \in \X_\infty$.
\end{cor}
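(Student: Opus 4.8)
The plan is to reduce the corollary to Theorem~\ref{thm:ogc_conv} by an averaging-over-randomness argument. The key observation is that, conditioned on the entire history up to and including the computation of the distributions $y_k(j)$ for $j \in \cJ_d$, the random control $S_k(j)$ drawn from $y_k(j)$ satisfies
\[
\mathbb{E}\!\left[ C_k^{(j)}(S_k(j)) \,\middle|\, y_k(j) \right] = \sum_{s \in \S_k^{(j)}} C_k^{(j)}(s)\, y_k(j)(s) = \hat{C}_{k+1}^{(j)}(y_k(j)),
\]
by the very definition of the advertised linear surrogate in \eqref{eqn:advDisc2}. Hence the expected per-step cost incurred by each discrete LC equals exactly the surrogate cost that the CC used when it chose $x_{k+1}$ via the OGC update.

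The first step is therefore to write $\mathbb{E}\,R_n(\bz_{1:n})$ and, using the tower property together with the identity above, replace each term $w_j\,\mathbb{E}\,C_k^{(j)}(S_k(j))$ by $w_j\,\mathbb{E}\,\hat{C}_{k+1}^{(j)}(y_k(j))$. After this substitution, the quantity inside the sum becomes
\[
\mathbb{E}\!\left[ \sum_{j \in \cJ_c} w_j C_k^{(j)}(y_k(j)) + \sum_{j \in \cJ_d} w_j \hat{C}_{k+1}^{(j)}(y_k(j)) + G_k(y_k) - F_k(z_k) \right],
\]
which by \eqref{eqn:objF} — recalling that the persistent predictor makes $\hat{C}_{k+1}^{(j)} = C_k^{(j)}$ for convex devices and that for discrete devices the surrogate is precisely what enters $F_k$ — is exactly $\mathbb{E}\,[F_k(y_k) - F_k(z_k)]$. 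In other words, $\mathbb{E}\,R_n(\bz_{1:n}) = \mathbb{E}\,r_n(\bz_{1:n})$, where $r_n$ is the (deterministic-form) dynamic regret of the OGC run on the objectives $F_k$ with feasible sets $\U_k$.

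The second step is to verify that the hypotheses of Theorem~\ref{thm:ogc_conv} are met for this induced OGC instance: the advertised sets $\A_{n+1}(j) = \Delta(\S_n(j))$ are convex and compact, the surrogate $\hat C^{(j)}_{n+1}$ is linear hence convex with bounded Lipschitz gradient (Assumption~\ref{asm:lip}), the simplices $\Delta(\S_n(j))$ are uniformly bounded whenever the underlying $\S_n(j)$ are (Assumption~\ref{asm:bounded_sets}), and the $\varepsilon$-accuracy of $\hat y_n$ (Assumption~\ref{asm:meas}) is unchanged since the CC still runs \eqref{eqn:alg_CC} verbatim. The ROGC updates for the CC and for $j \in \cJ_c$ are literally the OGC updates, and for $j \in \cJ_d$ the computation $y_n(j) = \proj[\Delta(\S_n(j))]{x_n(j)}$ plays exactly the role of \eqref{eqn:alg_LC} with feasible set $\S_n(j)$ replaced by its convexification $\Delta(\S_n(j))$. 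Thus Theorem~\ref{thm:ogc_conv} applies and gives $\limsup_n r_n(\bz_{1:n})/n$ bounded by the stated right-hand side; taking expectations and using $\mathbb{E}\,R_n = \mathbb{E}\,r_n$ (and that the bound is deterministic) concludes the argument. Finally one notes that $\limsup$ and $\mathbb{E}$ can be interchanged in the direction needed because the deterministic upper bound on $r_n(\bz_{1:n})/n$ holds pathwise, so $\mathbb{E}\,\limsup_n R_n/n \le \limsup_n \mathbb{E}\,R_n/n$ is not even required — the pathwise bound suffices.

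The main obstacle is conceptual bookkeeping rather than any hard estimate: one must be careful that the ``state'' $y_k$ fed into the next OGC update \eqref{eqn:alg_CC} is the vector of \emph{continuous} quantities $y_k(j)$ (the projected setpoints / distributions), not the realized random controls $S_k(j)$, so that the OGC recursion driving the analysis is genuinely deterministic given $\{\S_k\}$ and the update noise $\epsilon_k$; the randomness enters \emph{only} through the realized costs $C_k^{(j)}(S_k(j))$, and it is averaged out cleanly by the linearity in \eqref{eqn:advDisc2}. Once this is set up correctly, the corollary is immediate.
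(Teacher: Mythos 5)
Your proof is correct and follows exactly the route the paper intends: the paper offers no argument beyond calling the corollary a ``direct application'' of Theorem~\ref{thm:ogc_conv}, and your tower-property observation that the linearity of the surrogate in \eqref{eqn:advDisc2} makes $\mathbb{E}[C_k^{(j)}(S_k(j))\mid y_k(j)]$ coincide with the term entering $F_k(y_k)$ is precisely the intended justification, reducing $\mathbb{E}\,R_n$ to the OGC regret on the convexified problem to which the theorem's pathwise bound applies.
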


\begin{rem}
In our setting, we implicitly assume that the environment is \emph{oblivious} in the sense that it produces the same sequence of objective functions and feasible sets regardless of the applied control actions. The high-probability regret bounds can thus be obtained similarly to that shown in \cite[Lemma 1]{zinkevich}.  The extension to non-oblivious environments is a subject for future work.
\end{rem}

\begin{rem} \label{rem:nonConv}
Note that when the feasible set of a discrete device has large cardinality, the proposed ROGC algorithm might be impractical as it will require manipulations of large vectors. 
However, the ROGC algorithm  can be extended to cover this case (or any case of \emph{non-convex} bounded feasible set $\S_n(j)$) if  instead of considering directly the probability simplex $\Delta(\S_n(j))$ as the advertised convex feasible set, one considers a convex hull $\conv (\S_n(j))$.  The idea is to identify every $y \in \conv (\S_n(j))$ with a probability distribution $p_y \in \Delta(\S_n(j))$ parametrized by $y$ such that $y = \mathbb{E}_{S \sim p_y}(S)$. The extension to this case is a subject of ongoing work. 
\end{rem}

\subsection{Application to Real-Time Control of Electrical Grids} \label{sec:grids}
Consider the problem of controlling a collection of heterogeneous electrical resources that are interconnected via a portion of a power grid (e.g., a \emph{distribution feeder} or a \emph{microgrid}) under a typically time-varying objective and certain safety constraints. These resources can be photovoltaic (PV) systems, wind power plants, batteries, buildings, and electric vehicles. 
The resources are typically connected to the network via power inverters, thus allowing for the direct control of the (active and reactive) power setpoints at the point of connection.
This problem has recently received renewed interest through the advent of renewable energy sources, such as solar power, and improved battery and inverter technologies. 


The network-wide objective of the CC is to keep the power grid within the operational constraints (e.g., keeping node voltages and line currents within limits). Another possible goal of the CC is  to ensure that the power flow at the point of connection to the higher-level grid follows a given time-varying signal -- namely, making this subgrid \emph{dispatchable}. We next outline a concrete real-time control problem in the spirit of \cite{opfPursuit, commelec1}.

Consider an electrical distribution system comprising  $J+1$ nodes collected in the set $\cJ  \cup \{0\}$, $\cJ := \{1, \ldots, J\}$. Node $0$ is defined to be the distribution substation at which the voltage is fixed. Let $V_j \in \comps$ denote the voltage phasor at node $j = 1,\ldots J$, and let $v :=  [|V_1|, \ldots, |V_J|]^\sfT \in \mathbb{R}^{J}$ denote the vector collecting the voltage magnitudes. Without the loss of generality, we assume that there is a resource connected at every node $j \in \cJ$, thus identifying node $j$ with LC $j$. The control variable for each resource $j$ is given by $x(j) = (P(j), Q(j))^\sfT \in \reals^2$, where $P(J)$ and $Q(j)$ are the active and reactive power setpoints, respectively. As a convention, positive power means production, whereas negative power signifies consumption. Let $x = (x(1)^\sfT, \ldots, x(J)^\sfT)^\sfT \in \reals^{2J}$ collect the setpoints of the $J$ resources.  The relationship between $v$ and $x$ is given by the well-known nonlinear alternating-current (AC) power-flow equations $f(v, x) = 0$ (see, e.g., \cite{Vittalbook}). 

To design the controllers, the nonlinear power-flow equations $f$ are typically convexified or linearized around the current operation point. For the purpose of the example here, consider a possibly time-varying linear approximation to $f$ in the form
\begin{equation} \label{eqn:V}
v = A_n x + a_n,
\end{equation}
where the system-dependent matrix $A_n \in \mathbb{R}^{2J \times J}$ and vector $a_n \in \mathbb{R}^{J}$ can be computed in a variety of ways (e.g., ~\cite{Baran89,christ2013sens,sairaj2015linear,swaroop2015linear,bolognani2015linear} and pertinent references therein). Similarly, the active power flow at the substation $P(0)$ (namely, the power that is exported to the higher-level grid) can be approximated as
\begin{equation} \label{eqn:P0}
P(0) = w_n^\sfT x + b_n,
\end{equation}
for some $w_n \in \reals^{2J}$ and $b_n \in \reals$.

\subsubsection{Design of the Central Controller}

The CC obtains the advertisements $(\A_{n+1}(j), \hat{C}_{n+1}(j))$ from LCs $j\in \cJ$; see Section \ref{sec:LC_grid} below for details on  how the LCs compute those. It also receives the estimation of the implemented  power setpoint $\hat{y}_n$.  It then constructs the objective function $F_n$ according to \eqref{eqn:objF}. The network-wide objective $G_n$ is designed using \eqref{eqn:P0} to track a given sequence of power setpoints $\{P_{0, n}^{set}\}$ at the substation:
\begin{equation}
G_n(x) = 0.5 \left(w_n^\sfT x + b_n -  P_{0, n}^{set}\right)^2.
\end{equation}
The network-wide feasibility constraints are constructed using \eqref{eqn:V} as
\begin{equation}
\U_n := \left \{x \in \A_{n+1}: V_{\min} \leq  (A_n x + a_n)_j \leq V_{\max}, \, j \in \cJ \right\}
\end{equation}
which ensures that the voltage magnitudes are within the prescribed limits $V_{\min}$ and $V_{\max}$. Finally, the control variables for the next time step are computed using \eqref{eqn:alg_CC}.

\subsubsection{Design of Local Controllers} \label{sec:LC_grid}

For  the purpose of this example, suppose that every resource is either (i) a PV system; (ii) a heating, ventilating, and air-conditioning (HVAC) system;  or (iii) a battery.
These three types of devices cover most modern distributed energy resources. Indeed, the PV system represents a volatile renewable power generator, the HVAC system represents a non-convex (discrete) controllable load, and the battery represents a bi-directional energy-storage resource.
For every type of resource, we next present typical cost functions and feasibility constraints that are used to construct the advertisements to the CC.

For a PV system with inverter's rated power $S_{\text{inv}}(j)$ and an available active power $P_{\textrm{av},n}(j)$, the set $\S_n(j)$ is given by 
$$ \S_n(j)  =  \left\{(P(j), Q(j)) \hspace{-.1cm} : \,  0 \leq {P}(j) \leq  P_{\textrm{av},n}(j), \, {Q}(j)^2  \leq  S_{\text{inv}}(j)^2 - {P}(j)^2 \right\}; $$
see, e.g., \cite{commelec2,opfPursuit}.
Note that for PV inverters, the set $\S_n(j)$ is convex, compact, and time varying (it depends on the available power $P_{\textrm{av},n}(j)$, which in turn depends on the solar irradiance). The associated cost function typically encourages active-power generation and penalizes reactive power, e.g., 
\begin{equation} \label{eqn:pv_cost}
C_n^{(j)}(P(j), Q(j)) = -c_1 P(j) + c_2 Q(j)^2 
\end{equation}
for some positive constants $c_1$ and $c_2$.  Note that a PV system is a convex resource in the terminology of Section \ref{sec:rogc}, and thus the advertisement is defined as $\A_{n+1}(j) =  \S_n(j)$ and $\hat{C}_{n+1}^{(j)} = C_n^{(j)}$.

Consider now a simplified case of an HVAC system that can  be in either the ON or OFF state.  When the system is in the ON state, it consumes  $P_{\max}(j)$ active power and $0$ reactive power. Moreover, the system can be \emph{locked} in either the ON or OFF state because of cycling limitations and other constraints. Let $\ell_n(j) \in \{0, 1\}$ denote a binary state variable that equals $1$ if the system is locked at time step $n$. The details of the related state machine are omitted as they are not essential for this example. With this at hand, the finite set of feasible setpoints $\S_n(j)$ is given by
\[
\S_n(j) = \begin{cases}
\{0, -P_{\max}(j)\},  & \text{ if } \ell_n(j) = 0, \\
\{0\}, & \text{ if } \ell_n(j) = 1, \, P_{n-1}(j) = 0 \\ 
\{-P_{\max}(j)\}, & \text{ if } \ell_n(j) = 1, \, P_{n-1}(j) = -P_{\max}(j). 
\end{cases}
\]
Finally,  the cost of  being in the ON state, $C_n^{(j)}(-P_{\max}(j))$, and in the OFF state, $C_n^{(j)}(0)$, is system-dependent and reflects, for example,  the current temperature and its distance from the desired setting.  Observe that an HVAC system is a discrete resource in the terminology of  Section \ref{sec:rogc}, and therefore the advertisement is defined by \eqref{eqn:advDisc1} and \eqref{eqn:advDisc2}.  In particular, let $y \in [0, 1]$ denote the probability to turn the HVAC system on. Then
\[
\A_{n+1}(j) = \begin{cases}
[0, 1],  & \text{ if } \ell_n(j) = 0, \\
\{0\}, & \text{ if } \ell_n(j) = 1, \, P_{n-1}(j) = 0 \\ 
\{1\}, & \text{ if } \ell_n(j) = 1, \, P_{n-1}(j) = -P_{\max}(j)
\end{cases}
\]
and for $y \in \A_{n+1}(j)$, $\hat{C}_{n+1}^{(j)}(y) =  (1 - y)C_n^{(j)}(0) + yC_n^{(j)}(-P_{\max}(j))$.

Finally, consider a battery with state-of-charge at time step $n$ given by $\text{SoC}_n \in [0, 1]$.  Let $P_{\min, n}(j)$ and $P_{\max, n}(j)$ denote, respectively, the lower and upper bounds on active power production. These are time-varying quantities that depend on operating conditions of the battery, such as $\text{SoC}_n$ and the DC voltage; see, e.g., \cite{commelec2}.  With inverter's rated power $S_{\text{inv}}(j)$, the set $\S_n(j)$ is given by 
$$ \S_n(j)  =  \left\{(P(j), Q(j)) \hspace{-.1cm} : \,   P_{\min, n}(j) \leq {P}(j) \leq  P_{\max, n}(j), \, {Q}(j)^2  \leq  S_{\text{inv}}(j)^2 - {P}(j)^2 \right\} $$ 
similarly to the PV system. The associated cost function can be designed based on the $\text{SoC}_n$ and the desired value for the state-of-charge. For example, if $\text{SoC}_n$ is greater than the desired value, a function that encourages power production can be defined, similarly to \eqref{eqn:pv_cost}. Conversely, if $\text{SoC}_n$ is smaller than the desired value, a function that encourages power consumption can be defined, e.g.:
\[
C_n^{(j)}(P(j), Q(j)) = c_1 P(j) + c_2 Q(j)^2 
\]
for positive $c_1$ and $c_2$.  Note that, similarly to the PV system,  a battery is a convex resource, thus $\A_{n+1}(j) =  \S_n(j)$ and $\hat{C}_{n+1}^{(j)} = C_n^{(j)}$.

To conclude this section, we note that in \cite{commelec1} a special case of the OGC algorithm was used as an heuristic to control a realistic power system (a microgrid), and it was shown that this algorithm performed well numerically.

\section{Conclusion and Future Work} \label{sec:conc}

We  presented and analyzed a first-order control algorithm that bridges the gap between online convex optimization and  real-time control. We  showed that this algorithm possesses small dynamic regret under certain conditions on the measurement error and time variability of the optimal trajectory. The algorithm can be applied to control heterogeneous resources in real time, and in particular to control a mix of convex and discrete resources. We also illustrated the application of the algorithm in the context of real-time control of the electrical grid.

The proposed OGC algorithm is only a first (and the most straightforward)  example of control algorithms that can be applied in the proposed framework. An extension to other methods to optimize the setpoints is of interest. In particular, it is an interesting question whether a distributed algorithm (e.g., based on primal-dual decomposition method as in \cite{opfPursuit} or on the alternating direction method of multipliers (ADMM))  can be analyzed similarly to show the small dynamic regret. Further, extending the framework to general non-convex feasible sets seems possible (cf.~Remark \ref{rem:nonConv}) and is a subject of ongoing work.
Finally, an interesting research direction is in devising online control algorithms based on the concept of \emph{approachability} \cite{black54}, which is a more general concept than no-regret algorithms.

\bibliographystyle{plain}
\bibliography{biblio}
\end{document}